\newtheorem{thm}{Theorem}
\newtheorem{lem}{Lemma}
\newtheorem{cor}{Corollary}
\newtheorem{claim}{Claim}
\theoremstyle{definition}
\begin{document}

	\title{\bf Spectral condition for spanning $k$-ended trees in  $t$-connected graphs}
	\author{{Jiaxin Zheng,  Xueyi Huang\footnote{Corresponding author.}\setcounter{footnote}{-1}\footnote{\emph{E-mail address:} huangxymath@163.com}, \ Junjie Wang}\\[2mm]
		\small School of Mathematics, East China University of Science and Technology,\\
		\small Shanghai 200237, China}

	\date{}
	\maketitle
	{\flushleft\large\bf Abstract}  For any integer $k\geq 2$, a spanning $k$-ended tree is a spanning tree with at most $k$ leaves. In this paper, we provide a tight spectral radius condition for the existence of a spanning $k$-ended tree in  $t$-connected graphs, which generalizes a result of Ao, Liu and Yuan (2023).
	
	\begin{flushleft}
		\textbf{Keywords:} Spectral radius; spanning $k$-ended tree; closure.
	\end{flushleft}

	\section{Introduction}
	All graphs considered in this paper are undirected and simple. Let $G$ be a graph with vertex set $V(G)$ and  edge set $E(G)$, and let $e(G)=|E(G)|$. For any $v\in V(G)$, we denote by $N_G(v)$ and $d_G(v)$ (or $N(v)$ and $d(v)$ for short) the \textit{neighborhood} and \textit{degree} of $v$, respectively. For any $S\subseteq V(G)$, let $N_G(S)$ (or $N(S)$ for short) denote the set of vertices in $G$ that is adjacent to some vertex of $S$, and  let $G[S]$ denote the subgraph of $G$ induced by $S$. A \textit{clique} (resp.,  \textit{independent set}) of $G$ is a subset $S$ of $V$ such that $G[S]$ is complete (resp., empty). The \textit{clique number} (resp., \textit{independent number}) of $G$, denoted by $\omega(G)$ (resp., $\alpha(G)$), is the number of vertices in a maximum clique (resp., independent set) of $G$. A connected graph $G$ is called $t$-\textit{connected} if it has more than $t$ vertices and remains connected whenever fewer than $t$ vertices are removed. The \textit{join} of two graphs $G_1$ and $G_2$, denoted by $G_1\nabla G_2$,  is the graph obtained from $G_1\cup G_2$ by adding all possible edges between $G_1$ and $G_2$. Also, let $K_n$ denote the complete graph of order $n$.


A \textit{spanning tree} $T$ of a connected graph $G$ is a subgraph of $G$ that is a tree which includes all of the vertices of $G$.  For any integer $k \ge 2$, a spanning tree with at most $k$ leaves is called a  \textit{spanning $k$-ended tree}, where a leaf is a vertex of degree one. In particular, a spanning $2$-ended tree is called a  \textit{Hamilton path}.

In 1960,  Ore \cite{Or} presented a famous degree sum condition for a connected graph to have a Hamilton path.

\begin{thm}(Ore \cite{Or})\label{thm::1}
Let $G$ be a connected graph of order $n$. If $d(u) + d(v) \ge n-1$ for every two non-adjacent vertices $u$ and $v$, then $G$ has a Hamilton path.
\end{thm}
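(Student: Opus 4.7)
The plan is to argue by contradiction via a classical longest-path argument. Suppose $G$ satisfies the degree-sum hypothesis but admits no Hamilton path, and let $P = v_1 v_2 \cdots v_p$ be a longest path in $G$, so $p < n$. The first observation I would record is that, since $P$ cannot be extended at either end, every neighbor of $v_1$ (respectively $v_p$) must already lie on $P$; in particular $d(v_1), d(v_p) \le p-1$.

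Next I would split into two cases according to whether $V(P)$ supports a spanning cycle of $G[V(P)]$. In the easy case where such a cycle $C$ exists, connectivity of $G$ together with $p < n$ lets me pick an external vertex $u \notin V(P)$ adjacent to some $v_j \in V(P)$; cutting $C$ at an edge incident to $v_j$ and appending $u$ produces a path on $p+1$ vertices, contradicting the choice of $P$. To fall into this case when $v_1$ and $v_p$ are non-adjacent, I would invoke Ore's rotation trick: since $d(v_1)+d(v_p)\ge n-1\ge p$, the sets $S=\{i:v_1v_{i+1}\in E(G)\}$ and $T=\{i:v_iv_p\in E(G)\}$ both lie in $\{1,\dots,p-1\}$ and satisfy $|S|+|T|\ge p$, so some index $i$ lies in $S\cap T$. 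The path can then be rerouted as $v_1v_{i+1}v_{i+2}\cdots v_p v_iv_{i-1}\cdots v_2v_1$, yielding the desired spanning cycle of $G[V(P)]$. If instead $v_1v_p\in E(G)$, the cycle is obtained directly by closing $P$.

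The step I expect to require the most care is the pigeonhole count in the non-adjacent case, as it is precisely where the hypothesis $n-1$ enters and where the bound is essentially tight. One must be careful that both index sets sit inside $\{1,\dots,p-1\}$ (not $\{1,\dots,p\}$) and that the rerouted cycle genuinely covers every vertex of $V(P)$. The only other subtle point is ensuring the external neighbor $u$ exists: this is the unique place where connectedness of $G$ is used, and it is exactly what converts the spanning cycle of $G[V(P)]$ into a strictly longer path, closing the contradiction.
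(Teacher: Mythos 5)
Your argument is correct: the longest-path rotation argument (all neighbors of the endpoints lie on $P$, the pigeonhole count $|S|+|T|\ge n-1\ge p>p-1$ inside $\{1,\dots,p-1\}$ forcing a crossing index, and connectivity supplying the external vertex that extends the resulting spanning cycle of $G[V(P)]$ to a longer path) is exactly the classical proof of Ore's theorem. The paper only cites this result from Ore without giving a proof, so there is nothing in the text to compare against; your write-up correctly isolates the two delicate points (the index sets living in $\{1,\dots,p-1\}$ and the use of connectedness) and closes the contradiction.
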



In 1972, Chv\'{a}tal and Erd\H{o}s \cite{CE} proved that every $t$-connected graph with independent number at most $t+1$ has a  Hamilton path.

\begin{thm}(Chv\'{a}tal and Erd\H{o}s \cite{CE})\label{thm::2}
Let $t \ge 1$ be an integer, and let $G$ be a $t$-connected graph. If $\alpha (G) \le t+1$, then $G$ has a Hamilton path.
\end{thm}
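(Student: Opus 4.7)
The plan is to reduce the Hamilton-path statement to the Hamilton-cycle form of the Chv\'{a}tal--Erd\H{o}s theorem via a universal-vertex construction, and then to prove the cycle form by a rotation argument on a longest cycle. I would first form $G^{*}=G\nabla K_{1}$ by joining a new vertex $z$ to every vertex of $G$. A Hamilton path $v_{1}\cdots v_{n}$ of $G$ is in bijection with the Hamilton cycle $z v_{1}\cdots v_{n} z$ of $G^{*}$, so it suffices to show $G^{*}$ is Hamiltonian. Since $z$ is universal, any vertex cut of $G^{*}$ must contain $z$, giving $\kappa(G^{*})\ge\kappa(G)+1\ge t+1$; and any independent set of $G^{*}$ either equals $\{z\}$ or avoids $z$ entirely, so $\alpha(G^{*})=\alpha(G)\le t+1$. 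Consequently $\kappa(G^{*})\ge\alpha(G^{*})$.

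It therefore remains to establish the cycle version: if $H$ has $|V(H)|\ge 3$ and $\kappa(H)\ge\alpha(H)$, then $H$ has a Hamilton cycle. I would take a longest cycle $C=v_{1}v_{2}\cdots v_{p}v_{1}$ in $H$ and suppose, toward a contradiction, that $p<|V(H)|$. Let $B$ be a component of $H-V(C)$. Because $H$ is $\kappa(H)$-connected, $B$ has at least $\kappa(H)$ neighbors on $C$, say $v_{i_{1}},\ldots,v_{i_{s}}$ with $s\ge\kappa(H)$. Writing $v_{i_{j}}^{+}$ for the successor of $v_{i_{j}}$ along $C$, the core of the argument is the claim that for any $w\in V(B)$ the set
$$
\{v_{i_{1}}^{+},v_{i_{2}}^{+},\ldots,v_{i_{s}}^{+}\}\cup\{w\}
$$
is independent in $H$. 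This would yield $\alpha(H)\ge s+1>\kappa(H)$, contradicting the hypothesis.

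The main obstacle is verifying this independence claim. If some edge $wv_{i_{j}}^{+}\in E(H)$ existed, one could reroute $C$ by deleting the edge $v_{i_{j}}v_{i_{j}}^{+}$ and replacing it with a detour that enters $B$ at a neighbor of $v_{i_{j}}$, traverses a path inside $B$ ending at $w$, and leaves via the edge $wv_{i_{j}}^{+}$, producing a strictly longer cycle. If instead $v_{i_{j}}^{+}v_{i_{l}}^{+}\in E(H)$ for some $j<l$, one splices $C$ using this chord together with a $v_{i_{j}}$--$v_{i_{l}}$ path through $B$, reversing the arc of $C$ between $v_{i_{j}}^{+}$ and $v_{i_{l}}$; the resulting cycle uses every vertex of $C$ together with at least one vertex of $B$, hence is longer than $C$. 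Both sub-arguments are elementary but require careful tracking of orientations along $C$, and the chord case is the more delicate one. Once the independence claim is secured, the contradiction with $\kappa(H)\ge\alpha(H)$ is immediate, completing the reduction and hence the proof of Theorem~\ref{thm::2}.
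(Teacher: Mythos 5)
The paper does not prove Theorem~\ref{thm::2}; it is quoted from Chv\'atal--Erd\H{o}s \cite{CE} as a known result, so there is no internal proof to compare against. Your proposal is essentially the original Chv\'atal--Erd\H{o}s argument and it is correct: the universal-vertex reduction works because $\kappa(G\nabla K_1)=\kappa(G)+1\ge t+1$ while $\alpha(G\nabla K_1)=\alpha(G)\le t+1$ (and $|V(G\nabla K_1)|\ge 3$ since a $t$-connected $G$ has more than $t\ge 1$ vertices), and the longest-cycle argument with the independence of $\{v_{i_1}^{+},\ldots,v_{i_s}^{+},w\}$ is the standard rotation proof of the cycle version. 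Two small points you should make explicit when writing it up: first, dispose of the degenerate case $\alpha(H)=1$ (then $H$ is complete, hence trivially Hamiltonian), so that $\kappa(H)\ge 2$ and a longest cycle exists; second, the inequality $s\ge\kappa(H)$ needs the observation that all neighbours of the component $B$ outside $B$ lie on $C$ and that the attachment set is a \emph{proper} subset of $V(C)$ --- the latter follows from the same longer-cycle argument, since two consecutive attachment vertices would allow a detour through $B$. With these routine details filled in, the proof is complete.
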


In \cite{BT}, Broersma and Tuinstra generalized the conclusion of Theorem \ref{thm::1} by providing a degree sum condition for the existence of a spanning $k$-ended tree in connected graphs.

\begin{thm}(Broersma and Tuinstra \cite{BT})\label{thm::3}
Let $G$ be a connected graph of order $n$, and let $k \ge 2$ be an integer. If $d(u)+d(v) \ge n-k+1$ for every two nonadjacent vertices $u$ and $v$, then $G$ has a spanning $k$-ended tree.
\end{thm}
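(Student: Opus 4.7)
The plan is to prove the theorem via a Bondy--Chv\'atal style closure argument. Define the \emph{$k$-closure} $c_k(G)$ of $G$ as the graph obtained from $G$ by iteratively joining any pair of non-adjacent vertices $u,v$ with $d(u)+d(v)\ge n-k+1$, until no such pair remains. A first observation is monotonicity: vertex degrees only grow during the process, so if the hypothesis $d_G(u)+d_G(v)\ge n-k+1$ holds for every non-adjacent pair of $G$, then the same inequality persists for every pair still non-adjacent at the end of the process. By construction of $c_k(G)$, no non-adjacent pair in $c_k(G)$ can satisfy the inequality; hence $c_k(G)$ has no non-adjacent pair at all, i.e.\ $c_k(G)=K_n$, which trivially admits a spanning $k$-ended tree (for instance a Hamilton path).

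The whole argument then reduces to a single closure-invariance lemma: if $u,v$ are non-adjacent vertices of $G$ with $d_G(u)+d_G(v)\ge n-k+1$, then $G$ has a spanning $k$-ended tree if and only if $G+uv$ does. Iterating this lemma back along the closure construction transports a spanning $k$-ended tree of $K_n=c_k(G)$ to $G$. Only the reverse direction is non-trivial. Let $T$ be a spanning $k$-ended tree of $G+uv$; if $uv\notin E(T)$ we are done, so assume $uv\in E(T)$. Removing $uv$ splits $T$ into subtrees $T_u\ni u$ and $T_v\ni v$, with leaf sets $L_u:=L(T)\cap V(T_u)$ and $L_v:=L(T)\cap V(T_v)$ satisfying $|L_u|+|L_v|\le k$. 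The task is to find an edge $xy\in E(G)$ with $x\in V(T_u)$, $y\in V(T_v)$ so that the replacement tree $T':=T-uv+xy$ still has at most $k$ leaves.

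A direct computation of $|L(T')|-|L(T)|$ in terms of $x$, $y$, $d_T(u)$ and $d_T(v)$ separates several regimes. When $d_T(u),d_T(v)\ge 3$, removing $uv$ creates no new leaves at $u$ or $v$, so any $G$-edge between $V(T_u)$ and $V(T_v)$ works; such an edge exists because $G$ is connected. When $d_T(u)=1$ and $d_T(v)\ge 3$ (or mirror), one is forced to take $x=u$, and any $y\in N_G(u)\cap V(T_v)$ does the job, which is non-empty by the connectivity of $G$. When exactly one of $d_T(u),d_T(v)$ equals $2$ and the other is $\ge 3$, a new leaf appears at the degree-$2$ side and must be offset by choosing $x$ or $y$ in $L_u\cup L_v$; this can be arranged through a counting argument using the degree condition.

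The main obstacle is the remaining extreme case $d_T(u)=1$ with $d_T(v)=2$ (and its mirror), where $V(T_u)=\{u\}$ forces $x=u$, and the unit new leaf created at $v$ must be offset by taking $y\in L_v$. Establishing $N_G(u)\cap L_v\ne\emptyset$ requires the sharpest use of $d_G(u)+d_G(v)\ge n-k+1$: both $N_G(u)$ and $N_G(v)$ lie inside the $(n-2)$-set $V(T_v)\setminus\{v\}$, and $|L_v|\le k-1$. When a direct pigeonhole does not close, one performs a deeper modification of $T_v$, using a $G$-neighbor of $v$ to relocate a leaf of $T_v$ before reattaching $u$. This delicate bookkeeping of leaf counts under nested tree swaps is the technical heart of the proof.
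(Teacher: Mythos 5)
The paper does not prove this statement at all (it is quoted from Broersma and Tuinstra \cite{BT}), so your proposal has to stand on its own, and it does not. Your entire argument is a reduction to a single ``closure-invariance lemma'' at threshold $n-k+1$: if $u,v$ are nonadjacent with $d_G(u)+d_G(v)\ge n-k+1$, then $G$ has a spanning $k$-ended tree iff $G+uv$ does. Note that this lemma is \emph{strictly stronger} than Theorem~\ref{thm::3} itself (it implies it instantly via your complete-closure observation), and it is also stronger than the closure theorem that is actually available in the literature, namely the $(n-1)$-closure of Broersma and Tuinstra (Theorem~\ref{thm::5} here): under the hypothesis of Theorem~\ref{thm::3} the $(n-1)$-closure need not be complete, so the known closure result cannot be substituted for yours. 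Thus, unless you fully prove the $n-k+1$ invariance lemma, the argument has merely restated the problem in a harder form, and it is not at all clear that the lemma is even true at this weaker threshold.

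And the lemma is not proved in your sketch. The case $d_T(u)=d_T(v)=2$ is missing entirely (there both $u$ and $v$ become new leaves after deleting $uv$, so you must find a $G$-edge joining a leaf of $T_u$ to a leaf of $T_v$ or have slack); the mixed case ``one degree $2$, the other $\ge 3$'' is only asserted to follow from ``a counting argument,'' but the needed crossing edge need not involve $u$ or $v$, so the degree hypothesis does not obviously produce it; and in the case you yourself single out as the main obstacle ($d_T(u)=1$, $d_T(v)=2$, so $x=u$ is forced) the pigeonhole demonstrably fails: if $T$ has exactly $k$ leaves and $N_G(u)$ misses all $k-1$ leaves lying in $T_v$, one only gets $d_G(u)\le n-k-1$, which is perfectly compatible with $d_G(u)+d_G(v)\ge n-k+1$ because $d_G(v)$ may be as large as $n-2$. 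The ``deeper modification of $T_v$, relocating a leaf before reattaching $u$'' is exactly the technical heart you acknowledge, and it is never described, let alone verified to preserve the leaf count. Concretely: either supply a complete proof of the $n-k+1$ closure lemma (all degree cases, with the tree-rotation step made precise), or abandon the closure route and prove Theorem~\ref{thm::3} directly (e.g., by an extremal choice of spanning tree or longest path, as in \cite{BT}); as written there is a genuine gap.
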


In \cite{Win}, Win generalized the result of Theorem \ref{thm::2} to spanning $k$-ended trees, which confirms a conjecture of Las Vergnas.

\begin{thm}(Win \cite{Win})\label{thm::4}
Let $k \ge 2$ be an integer,  and let $G$ be a $t$-connected graph. If $\alpha (G) \le k+t-1$, then $G$ has a spanning $k$-ended tree.
\end{thm}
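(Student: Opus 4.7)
My plan is to prove Theorem \ref{thm::4} by contradiction via the extremal method: assume $G$ is $t$-connected with $\alpha(G) \le k+t-1$ yet admits no spanning $k$-ended tree, choose a spanning tree $T$ of $G$ whose leaf count $\ell$ is minimum (so $\ell \ge k+1$), let $L$ be the set of leaves of $T$, and derive an independent set of $G$ of size at least $k+t$, contradicting $\alpha(G) \le k+t-1$.

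The first, more standard step is to verify that $L$ itself is independent in $G$. Suppose $u, v \in L$ were adjacent in $G$. The unique $u$-$v$-path $P$ in $T$ together with the chord $uv$ forms a cycle; removing an interior edge $xy$ of $P$ at least one of whose endpoints has $T$-degree $\ge 3$, and inserting $uv$, yields a spanning tree whose leaf count is strictly smaller (as $u, v$ become non-leaves while at most one of $x, y$ can become a leaf), contradicting the minimality of $\ell$. Such an edge $xy$ is guaranteed to exist: since $\ell \ge k+1 \ge 3$, some interior vertex of $P$ must have $T$-degree $\ge 3$, for otherwise $T$ would be exactly the path $P$ with only two leaves. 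A few edge cases (such as when $P$ has length $2$) are handled separately by a similar exchange.

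The second, more delicate step is to augment $L$ by $t-1$ further vertices $z_1, \ldots, z_{t-1} \in V(G) \setminus L$ that are pairwise non-adjacent and each non-adjacent to every leaf. For any two distinct leaves $u, v \in L$, Menger's theorem applied to the $t$-connected graph $G$ supplies $t$ internally-disjoint $u$-$v$-paths; only one of these can lie in $T$, so the remaining $t-1$ paths pass through $V(G) \setminus L$. The candidates $z_i$ are selected from these alternate paths, and a leaf-reducing exchange argument parallel to step one shows that any adjacency between $z_i$ and a leaf, or between two $z_i$, would allow a modification of $T$ reducing $\ell$. Combining, $L \cup \{z_1, \ldots, z_{t-1}\}$ is an independent set of size $\ell + (t-1) \ge k+t$, the desired contradiction.

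The main obstacle is this second step: simultaneously choosing the $t-1$ auxiliary vertices so that all required non-adjacencies can be certified by leaf-reducing exchanges. When a candidate is blocked by an unwanted adjacency, the argument must redirect along another Menger path, and the bookkeeping of such redirections --- a hybrid of the Chv\'{a}tal--Erd\H{o}s rotation/extension technique with the tree-exchange argument of step one --- is where the technical heart of the proof lies.
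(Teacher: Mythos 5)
The statement you are proving is not proved in this paper at all: Theorem \ref{thm::4} is quoted from Win \cite{Win} and used as a black box (in the proof of Claim \ref{claim::1}), so there is no internal proof to compare against; your attempt must therefore stand on its own, and it does not. Your first step is fine and standard: for a spanning tree $T$ minimizing the number $\ell$ of leaves with $\ell\ge 3$, the leaf set $L$ is independent, which for $t=1$ already yields the contradiction $\alpha(G)\ge \ell\ge k+1$. But for $t\ge 2$ everything rests on your second step, and that step is only announced, not carried out. You never specify how the vertices $z_1,\dots,z_{t-1}$ are chosen, and the one concrete device you invoke does not deliver what you need: Menger's theorem applied to a single pair of leaves $u,v$ gives $t$ internally disjoint $u$--$v$ paths, but their internal vertices come with no control whatsoever over adjacencies to the other $\ell-2$ leaves or to each other (indeed such a path may even pass through other leaves of $T$, since independence of $L$ only forbids consecutive leaves on it). You assert that ``a leaf-reducing exchange argument parallel to step one'' handles any unwanted adjacency, but no such exchange is exhibited: if a candidate $z_i$ is adjacent to some leaf $w$, the edge $z_iw$ closes a cycle through the tree path from $z_i$ to $w$, and deleting an edge of that cycle removes the leaf status of $w$ only, possibly creating a new leaf, so the leaf count need not drop; nothing in your sketch shows how to choose the deleted edge, nor why the ``redirection'' process you allude to terminates. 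You yourself flag this bookkeeping as ``where the technical heart of the proof lies,'' which is an admission that the heart is missing --- and it is precisely the whole content of Win's theorem beyond the easy $t=1$ case.

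There is also a structural worry beyond incompleteness: you are trying to prove a statement stronger than the theorem, namely that some independent set of size $k+t$ can be found \emph{containing all} leaves of a minimum-leaf spanning tree, with the extra $t-1$ vertices non-adjacent to every leaf. The contrapositive of Theorem \ref{thm::4} only guarantees the existence of some independent set of size $k+t$, and Win's actual argument (and later proofs of Las Vergnas' conjecture) does not proceed by augmenting the leaf set of an extremal spanning tree in this way. If you wish to pursue your plan, you must either prove this stronger structural claim or weaken the target, e.g.\ by allowing the exchange to replace $T$ or to swap some leaves out of the independent set; as written, the argument has a genuine gap exactly where the difficulty of the theorem is concentrated.
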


For any fixed integer $l \ge 0$, the $l$-\textit{closure} of $G$, denoted by $C_{l}(G)$,  is the graph obtained from $G$ by recursively joining pairs of non-adjacent vertices whose degree sum is at least $l$ until no such pair remains. Also, we say that $G$ is $l$-\textit{closed} if $C_{l}(G)=G$.   In \cite{BT}, Broersma and Tuinstra presented a closure theorem for spanning $k$-ended tree.

\begin{thm}(Broersma and Tuinstra \cite{BT})\label{thm::5}
Let $G$ be a connected graph of order $n$, and let $k$ be an integer with $2 \le k \le n-1$. Then $G$ has a spanning $k$-ended tree if and only if the $(n-1)$-closure $C_{n-1}(G)$ of $G$ has a spanning $k$-ended tree.
\end{thm}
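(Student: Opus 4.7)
The forward direction is immediate: any spanning tree of $G$ is already a spanning tree of $C_{n-1}(G)$ with the same leaf set, so if $G$ has a spanning $k$-ended tree then so does $C_{n-1}(G)$. For the converse, I would iterate on the closure construction; it suffices to establish the single-edge statement: \emph{if $u,v$ are non-adjacent vertices of $G$ with $d_G(u)+d_G(v)\ge n-1$ and $G+uv$ has a spanning $k$-ended tree, then so does $G$.} The theorem then follows by applying this statement inductively to each edge added in forming $C_{n-1}(G)$ from $G$.

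To prove the single-edge statement I would argue by contradiction. Assume $G+uv$ has a spanning $k$-ended tree but $G$ does not, and let $T$ be such a tree in $G+uv$; necessarily $uv\in E(T)$, since otherwise $T\subseteq G$. Delete $uv$ from $T$ to obtain two subtrees $T_u,T_v$ with vertex sets $A\ni u$ and $B\ni v$. For any edge $xy\in E(G)$ with $x\in A$, $y\in B$, the graph $T':=T-uv+xy$ is a spanning tree of $G$, and a direct computation of degree changes shows
\[
|L(T')|-|L(T)| \;=\; [d_T(u){=}2]\cdot[x\ne u]+[d_T(v){=}2]\cdot[y\ne v]-[x\in L(T)]-[y\in L(T)],
\]
where $L(T)$ denotes the leaf set of $T$. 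The plan is to select $T$ with the smallest possible number of leaves, and then to exhibit a swap edge $xy$ making the right-hand side $\le 0$; this would yield a spanning tree of $G$ with at most $k$ leaves and finish the contradiction.

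The main obstacle is the existence of such a swap edge. I would split into cases on $(d_T(u),d_T(v))$. If both $d_T(u),d_T(v)\ge 3$, then any swap works, and the degree condition forces this case whenever $u$ has a neighbour in $B\setminus\{v\}$ or $v$ has a neighbour in $A\setminus\{u\}$; otherwise $d_G(u)+d_G(v)\le(|A|-1)+(|B|-1)=n-2$, contradicting the hypothesis. The delicate case is $d_T(u)=d_T(v)=2$, where we must find either (i) a neighbour of $u$ in $B\setminus\{v\}$ that is a leaf of $T$, (ii) a symmetric neighbour of $v$ in $A\setminus\{u\}$, or (iii) an edge $xy\in E(G)$ between $A\setminus\{u\}$ and $B\setminus\{v\}$ with both $x,y$ leaves of $T$. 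Here I would use $d_G(u)+d_G(v)\ge n-1=|A|+|B|-1$ together with the bound $|L(T)|\le k$ to count the contributions of non-leaf and leaf vertices in $A\setminus\{u\}$ and $B\setminus\{v\}$; if none of (i)--(iii) held, the neighbourhoods of $u$ and $v$ would be forced to lie outside $L(T)$ on each side, giving $d_G(u)+d_G(v)\le(|A|-1)+(|B|-1)-|L(T)\cap(A\cup B)|+\text{(correction)}<n-1$, the sought contradiction. This packaging is the technical heart of the proof.
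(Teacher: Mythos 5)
This is a quoted result (Theorem~\ref{thm::5} is cited from Broersma and Tuinstra \cite{BT}), so the paper contains no proof to compare against; your proposal has to stand on its own, and as written it has a genuine gap at exactly the place you call the technical heart. The reduction to the single-edge statement and the forward direction are fine, and your case distinction correctly identifies that in the hard case $d_T(u)=d_T(v)=2$ a leaf-non-increasing swap requires one of (i)--(iii). But the claimed counting ``if none of (i)--(iii) holds then $d_G(u)+d_G(v)<n-1$'' is false: failure of (i)--(iii) only forbids the \emph{cross} neighbours of $u$ from being leaves of $T_v$ and the cross neighbours of $v$ from being leaves of $T_u$; $u$ may still be adjacent to all of $A\setminus\{u\}$ and to every non-leaf of $B\setminus\{v\}$ (and symmetrically for $v$), which only gives $d_G(u)+d_G(v)\le 2n-4-|L(T)\cap(A\setminus\{u\})|-|L(T)\cap(B\setminus\{v\})|$, perfectly compatible with $d_G(u)+d_G(v)\ge n-1$. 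Concretely, take $n=6$, $k=2$, $u=p_3$, $v=p_4$, and $E(G)=\{p_1p_2,\,p_2p_3,\,p_4p_5,\,p_5p_6,\,p_1p_3,\,p_3p_5,\,p_2p_4\}$. Then $d(u)+d(v)=5=n-1$, the path $P=p_1p_2p_3p_4p_5p_6$ is a minimum-leaf spanning tree of $G+uv$ containing $uv$ with $d_P(u)=d_P(v)=2$, the only $G$-edges between $A=\{p_1,p_2,p_3\}$ and $B=\{p_4,p_5,p_6\}$ are $p_3p_5$ and $p_2p_4$, and both swaps $P-uv+xy$ produce trees with three leaves; yet $G$ does have a Hamilton path ($p_6p_5p_4p_2p_1p_3$). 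So even with $T$ chosen leaf-minimal, a single edge exchange need not exist, and no counting of the type you propose can force the contradiction.

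What is missing is therefore an additional idea beyond one swap on a fixed extremal tree: one must either perform multi-step exchanges (add a cross edge to a non-leaf neighbour, then do a rotation that deletes a suitable edge on the created structure), or pair up potential neighbours of $u$ and $v$ in an Ore-type pigeonhole fashion along tree paths, or choose $T$ extremal with respect to a more refined quantity; this is essentially what Broersma and Tuinstra's actual argument does. As a minor point, your leaf-count identity also needs the indicators $[x\ne u]$ and $[y\ne v]$ on the subtracted terms (when $x=u$ is itself a leaf, i.e.\ $A=\{u\}$, the formula as written overstates the decrease), though that corner case is harmless compared with the main gap above.
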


The \textit{adjacency matrix} of a graph $G$ is defined as $A(G)=(a_{u,v})_{u,v\in V(G)}$, where $a_{u,v}=1$ if $u$ and $v$ are adjacent in $G$, and $a_{u,v}=0$ otherwise. The (adjacency) \textit{spectral radius} of $G$, denoted by $\rho(G)$, is the largest eigenvalue of  $A(G)$. In recent years, the problem of finding spectral conditions for graphs having certain structural properties or containing specified kinds of subgraphs has received considerable attention. Cioab\u{a}, Gregory and Haemers \cite{CGH} found a best upper bound on the third largest eigenvalue that is sufficient to guarantee that an $n$-vertex $k$-regular graph $G$ has a perfect matching when $n$ is even, and a matching of order $n-1$ when $n$ is odd. Fiedler and Nikiforov \cite{FN} gave a spectral radius condition for graphs to have a Hamilton cycle or Hamilton path. Li and Ning \cite{LN} provided a tight spectral radius condition for graphs with bounded minimum degree to have a Hamilton cycle or Hamilton path. 
Cioab\u{a}, Feng, Tait and Zhang \cite{CFTZ} provided a tight spectral radius for graphs to contain a friendship graph of given order as a subgraph. For more results on this topic, we refer the reader to  \cite{ALY,BZ,O16,O22,OC,OPPZ,WKX,ZL}, and references therein.

In this paper, we study on the spectral conditions of spanning $k$-ended trees in $t$-connected graphs. To achieve this goal, we first present an edge number condition.

\begin{thm}\label{thm::6}
Let $k\geq 2$ and $t\geq 1$ be integers, and let  $G$ be a $t$-connected graph of order $n$.  If $n \ge \max\{ 6k+6t-1, k^2+t k+t+1\}$ and
$$e(G) \ge {n-k-t \choose 2}+(k+t-1)^2+k+t,$$ then $G$ has a spanning $k$-ended tree unless $C_{n-1}(G) \cong K_t \nabla (K_{n-k-2t+1} \cup (k+t-1)K_{1})$.
	\end{thm}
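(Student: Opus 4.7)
The plan is to combine Theorem~\ref{thm::5} (closure characterization for spanning $k$-ended trees) with Theorem~\ref{thm::4} (Win's independence-number bound), and then perform an edge count on a maximum independent set. Suppose for contradiction that $G$ has no spanning $k$-ended tree, and set $G' := C_{n-1}(G)$. By Theorem~\ref{thm::5}, $G'$ has no spanning $k$-ended tree either; since $(n-1)$-closure preserves $t$-connectivity, Theorem~\ref{thm::4} yields $\alpha(G') \geq k+t$. The defining property of the closure is that $d_{G'}(u) + d_{G'}(v) \leq n-2$ for every non-adjacent pair $u, v \in V(G')$, and we clearly have $e(G') \geq e(G)$.

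Fix an independent set $I = \{u_1, \ldots, u_{k+t}\}$ of $G'$ ordered by $d_{G'}(u_1) \leq \cdots \leq d_{G'}(u_{k+t})$. Independence forces $N_{G'}(u_i) \subseteq V(G')\setminus I$, so $d_{G'}(u_i) \leq n-k-t$ for each $i$. The decomposition
\[
e(G') = \sum_{i=1}^{k+t} d_{G'}(u_i) + e\bigl(G'[V(G')\setminus I]\bigr) \leq \sum_{i=1}^{k+t} d_{G'}(u_i) + \binom{n-k-t}{2},
\]
together with the edge hypothesis, gives $\sum_i d_{G'}(u_i) \geq (k+t-1)^2 + (k+t)$. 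Combined with the pairwise closure bounds $d_{G'}(u_i) + d_{G'}(u_j) \leq n-2$ (for $i \neq j$), this sharply constrains the degree sequence of $I$.

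A case split on $d_{G'}(u_{k+t-1})$ finishes the argument. In the main case $d_{G'}(u_{k+t-1}) \leq t$, the estimates $\sum_{i<k+t} d_{G'}(u_i) \leq (k+t-1)t$ and $d_{G'}(u_{k+t}) \leq n-k-t$ together with the hypothesis $n \geq k^2+kt+t+1$ force all these inequalities to be essentially tight; a closer inspection, applying the closure constraint to each pair $\{u_i, v\}$ with $v \in V(G')\setminus I$ non-adjacent to $u_i$, then shows that $d_{G'}(u_i) = t$ for $i < k+t$, $d_{G'}(u_{k+t}) = n-k-t$, $G'[V(G')\setminus I] \cong K_{n-k-t}$, and each $v \in V(G')\setminus I$ is adjacent either to all of $I$ (forming a dominant set $D$ of size $t$) or only to $u_{k+t}$ among $I$. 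Assembling $D$ as the $K_t$, $(V(G')\setminus(I\cup D))\cup\{u_{k+t}\}$ as the $K_{n-k-2t+1}$, and $\{u_1,\ldots,u_{k+t-1}\}$ as the $(k+t-1)K_1$ gives $G' \cong K_t \nabla (K_{n-k-2t+1} \cup (k+t-1)K_1)$. In the exceptional case $d_{G'}(u_{k+t-1}) \geq t+1$, the edge-count alone does not yield a contradiction; instead the plan is to construct explicitly a spanning tree of $G'$ with at most $k$ leaves, using $u_{k+t-1}$ and $u_{k+t}$ as internal vertices of a long path routed through the large near-clique of $V(G')\setminus I$ forced by the closure constraints, and attaching the remaining low-degree $u_i$'s so that at most $k$ are leaves. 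Making this routing rigorous (via a Hall-type or connectivity argument, using $n \geq 6k+6t-1$ to guarantee enough room) is the main obstacle of the proof.
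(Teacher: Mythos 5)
Your opening moves match the paper's: pass to $G'=C_{n-1}(G)$, invoke Theorem~\ref{thm::5}, and use Theorem~\ref{thm::4} to get $\alpha(G')\ge k+t$. But from there the proposal has two genuine gaps. First, the ``tightness'' deduction in your main case does not work. Your only quantitative input is $\sum_i d_{G'}(u_i)\ge (k+t-1)^2+k+t$, obtained by bounding $e(G'[V(G')\setminus I])\le\binom{n-k-t}{2}$. Since $n$ is only bounded below by $\max\{6k+6t-1,\,k^2+kt+t+1\}$ and is otherwise arbitrary, this inequality has slack of order $n-(k^2+kt+t+1)$, which can be arbitrarily large; it therefore does not force $d_{G'}(u_i)=t$ for $i<k+t$, nor $d_{G'}(u_{k+t})=n-k-t$, nor $G'[V(G')\setminus I]\cong K_{n-k-t}$, and the pairwise closure bounds $d(u_i)+d(u_j)\le n-2$ add essentially nothing here. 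The paper gets the near-clique structure by a separate argument (Lemma~\ref{lem::2}): it builds a clique $C$ containing \emph{all} vertices of degree at least $\frac{n-1}{2}$ (this is where $(n-1)$-closedness is really used), and a two-case count on $|C|$ shows $\omega(G')\ge n-k-t+1$; your independent-set count is not a substitute for this.

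Second, and more importantly, even granting degrees exactly $t$ for $u_1,\dots,u_{k+t-1}$, nothing in a counting argument forces their neighborhoods to coincide in a single $t$-set $D$ — that conclusion requires using the nonexistence of a spanning $k$-ended tree a second time, constructively. This is exactly the content of the paper's Claim~\ref{claim::2}: if some $(t+1)$-subset $X$ of the independent part has $|N(X)|\ge t+1$, Hall's theorem (Lemma~\ref{lem::3}) gives a matching saturating $X$, an alternating-path argument routed through the clique produces a Hamilton path of $G'[C\cup X]$, and attaching the remaining $k-2$ independent vertices as leaves yields a spanning $k$-ended tree, a contradiction; hence $|N(V(H))|=t$ and the extremal structure follows. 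You explicitly defer this construction (``the main obstacle''), and you also confine it to your exceptional case $d_{G'}(u_{k+t-1})\ge t+1$, whereas it is needed in your main case as well (degree-$t$ vertices attached at \emph{different} $t$-sets are not excluded by your counts). So the proposal is missing both the clique-forcing lemma and the Hall/alternating-path tree construction, which together constitute the core of the paper's proof.
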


Based on Theorem \ref{thm::6}, we provide the following tight spectral radius condition for the existence of a spanning $k$-ended tree in $t$-connected graphs. 
\begin{thm}\label{thm::7}
Let $k\geq 2$ and $t\geq 1$ be integers, and let  $G$ be a $t$-connected graph of order $n$. If $n \ge \max\{6k+6t-1, k^2+\frac{3}{2}kt+\frac{1}{2}t^2+\frac{1}{2}t+1\}$ and 
$$
\rho(G) \ge \rho(K_t \nabla (K_{n-k-2t+1} \cup (k+t-1)K_1)),
$$ 
then $G$ has a spanning $k$-ended tree unless $G \cong K_t \nabla (K_{n-k-2t+1} \cup (k+t-1)K_1)$.
\end{thm}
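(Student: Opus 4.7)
The plan is to argue by contradiction: assume $G$ is $t$-connected of order $n$ satisfying the hypotheses, with $\rho(G) \geq \rho(H)$ where $H := K_t \nabla (K_{n-k-2t+1} \cup (k+t-1)K_1)$, such that $G$ has no spanning $k$-ended tree and $G \not\cong H$. Observe at the outset that $G$ is connected and $\delta(G) \geq t$.

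First I would establish the strict lower bound $\rho(H) > n-k-t$ through the equitable partition of $V(H)$ into $V(K_t)$, $V(K_{n-k-2t+1})$, and $V((k+t-1)K_1)$. The associated $3 \times 3$ quotient matrix $B$ has $\rho(H)$ as its largest eigenvalue, and evaluating its characteristic polynomial at $\lambda = n-k-t$ produces the value $-t^2(k+t-1) < 0$, which forces $\rho(H) > n-k-t$.

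Next I would split into two cases according to $e(G)$. If $e(G) \geq T$ with $T := \binom{n-k-t}{2} + (k+t-1)^2 + (k+t)$, then Theorem~\ref{thm::6} combined with the absence of a spanning $k$-ended tree forces $C_{n-1}(G) \cong H$, making $G$ a proper spanning subgraph of $H$. Since both $G$ and $H$ are connected, the strict Perron--Frobenius monotonicity of the spectral radius under edge addition yields $\rho(G) < \rho(H)$, contradicting the hypothesis. In the harder case $e(G) \leq T-1$, I would invoke the classical min-degree-refined Hong bound
\[
\rho(G) \leq \frac{(\delta - 1) + \sqrt{(\delta - 1)^2 + 4(2e(G) - (n-1)\delta)}}{2}
\]
with $\delta = t$ to obtain an upper bound $U$ on $\rho(G)$ satisfying $U^2 - (t-1)U = 2(T-1) - (n-1)t$. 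Because $x \mapsto x^2 - (t-1)x$ is strictly increasing for $x \geq t$, proving $U < \rho(H)$ is equivalent to $\rho(H)^2 - (t-1)\rho(H) > 2(T-1) - (n-1)t$. Combining $\rho(H) > n-k-t$ with the same monotonicity reduces this to $(n-k-t)(n-k-2t+1) > 2(T-1) - (n-1)t$, and a direct expansion shows that the difference equals $2n + t(k+t-1) - 2(k+t)^2$, which is at least $2$ precisely under the hypothesis $n \geq k^2 + \tfrac{3}{2}kt + \tfrac{1}{2}t^2 + \tfrac{1}{2}t + 1$.

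The main obstacle is the latter case: the simpler bound $\rho(G) \leq \sqrt{2e(G) - n + 1}$ is not tight enough once $t \geq 2$, so one must use the refined Hong bound that exploits $\delta(G) \geq t$, and then the algebraic comparison between the resulting upper bound and the cubic implicitly defining $\rho(H)$ must be arranged so that the stated hypothesis on $n$ emerges as the sharp threshold. The identity tying $n_0 = k^2 + \tfrac{3}{2}kt + \tfrac{1}{2}t^2 + \tfrac{1}{2}t + 1$ to $2n + t(k+t-1) - 2(k+t)^2$ is precisely what calibrates this threshold.
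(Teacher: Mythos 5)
Your proposal is correct and takes essentially the same route as the paper: the same ingredients (the bound $\rho(H)>n-k-t$, the Hong--Shu--Fang/Nikiforov bound applied with $\delta(G)\ge t$, Theorem~\ref{thm::6}, and strict Perron--Frobenius monotonicity for the closure) appear there, with the paper simply deriving $e(G)>\binom{n-k-t}{2}+(k+t-1)^2+k+t$ directly from $\rho(G)\ge\rho(H)>n-k-t$ instead of splitting into cases on $e(G)$, and obtaining $\rho(H)>n-k-t$ by comparison with the subgraph $K_{n-k-t+1}$ rather than via the quotient matrix. The algebra you carry out (the identity $U^2-(t-1)U=2e(G)-(n-1)t$ and the surplus $2n-2k^2-3kt-t^2-t\ge 2$) matches the paper's computation, so the reorganization is only cosmetic.
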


By Theorem \ref{thm::6}, we immediately deduce a result of  Ao, Liu and Yuan regarding the existence of a spanning $k$-ended tree in connected graphs\cite{ALY}.

\begin{cor}(Ao, Liu and Yuan \cite{ALY})\label{cor::7}
Let $G$ be a connected graph of order $n$ and $k \ge 2$ be an integer. If $n \ge \max \{ 6k+5, k^2+\frac{3}{2}k+2 \}$ and 
$$
\rho (G) \ge \rho (K_1 \nabla (K_{n-k-1} \cup kK_1)),  
$$
then $G$ has a spanning $k$-ended tree unless $G \cong K_1 \nabla (K_{n-k-1} \cup kK_1)$.
\end{cor}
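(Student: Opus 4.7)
The plan is to derive Corollary~\ref{cor::7} as the immediate $t=1$ specialization of Theorem~\ref{thm::7}. A connected graph on $n \ge 2$ vertices is, by definition, $1$-connected in the sense of the paper (more than one vertex, and connectivity at least $1$), so the hypothesis of Theorem~\ref{thm::7} with $t=1$ is exactly the hypothesis of Corollary~\ref{cor::7}, provided the quantitative bounds on $n$, the spectral radius threshold, and the extremal graph all coincide after the substitution. (I note in passing that the excerpt references Theorem~\ref{thm::6}, but since the corollary is stated in spectral form the natural source is Theorem~\ref{thm::7}; either can be used, but the spectral version yields a direct specialization.)

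The first step is to verify the order bound. Setting $t=1$ in the expression
\begin{equation*}
\max\!\Bigl\{\,6k + 6t - 1,\; k^{2} + \tfrac{3}{2}kt + \tfrac{1}{2}t^{2} + \tfrac{1}{2}t + 1\Bigr\}
\end{equation*}
yields $\max\{\,6k+5,\; k^{2} + \tfrac{3}{2}k + 2\,\}$, which is precisely the lower bound on $n$ stated in the corollary. The second step is to substitute $t=1$ into the extremal graph
\begin{equation*}
K_{t}\,\nabla\,\bigl(K_{n-k-2t+1}\,\cup\,(k+t-1)K_{1}\bigr),
\end{equation*}
obtaining $K_{1}\,\nabla\,(K_{n-k-1}\,\cup\,kK_{1})$, which matches the graph appearing both inside the spectral radius on the right-hand side of the hypothesis and in the exceptional case of the corollary.

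With the order bound and the extremal graph matched, Theorem~\ref{thm::7} with $t=1$ reads: if $G$ is a connected graph of order $n \ge \max\{6k+5,\,k^{2}+\tfrac{3}{2}k+2\}$ satisfying $\rho(G) \ge \rho(K_{1}\,\nabla\,(K_{n-k-1}\,\cup\,kK_{1}))$, then $G$ contains a spanning $k$-ended tree unless $G \cong K_{1}\,\nabla\,(K_{n-k-1}\,\cup\,kK_{1})$. This is verbatim the statement of Corollary~\ref{cor::7}.

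There is no genuine obstacle here: the corollary is literally the $t=1$ instance of Theorem~\ref{thm::7}, and the entire argument reduces to the two arithmetic checks above. The only point that might be called a ``main step'' is the confirmation that the somewhat elaborate polynomial in $k$ and $t$ used to bound $n$ in Theorem~\ref{thm::7} collapses exactly to the bound used by Ao, Liu and Yuan when $t=1$, and that the join-structured extremal graph in the general theorem collapses to theirs. Both checks are routine, so the deduction is immediate.
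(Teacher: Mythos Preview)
Your proof is correct and matches the paper's approach: the corollary is stated there as an immediate consequence (the paper gives no explicit proof), and your derivation via the $t=1$ specialization of Theorem~\ref{thm::7} is exactly what is intended. You also correctly note that the paper's reference to Theorem~\ref{thm::6} just before the corollary is a slip---the spectral corollary follows directly from Theorem~\ref{thm::7}, not from the edge-count Theorem~\ref{thm::6}.
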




	\section{Proof of the main results}

In this section, we will prove Theorem \ref{thm::6}, and use it to deduce Theorem \ref{thm::7} directly. To achieve this goal, we need some useful lemmas.

Let $T$ be a tree. An \textit{internal vertex} of $T$ is a vertex with degree at least two. The following lemma suggests that the graph $K_t \nabla (K_{n-k-2t+1} \cup (k+t-1)K_1)$ considered in Theorems \ref{thm::6} and \ref{thm::7} contains no spanning $k$-ended trees.

\begin{lem}\label{lem::1}
Let $k \ge 2$ and $t\ge 1$ be integers. Then the graph $K_t \nabla (K_{n-k-2t+1} \cup (k+t-1)K_1)$ does not have a spanning $k$-ended tree.   
    \end{lem}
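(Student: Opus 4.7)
The plan is to assume for contradiction that the graph $H := K_t \nabla (K_{n-k-2t+1} \cup (k+t-1)K_1)$ admits a spanning tree $T$ with at most $k$ leaves, and to exploit the fact that the $k+t-1$ independent vertices only see the $K_t$-side of the join. Write $A$ for the vertex set of $K_t$, $C$ for the vertex set of $K_{n-k-2t+1}$, and $I$ for the independent set, so $|A|=t$ and $|I|=k+t-1$. Every edge of $H$ (hence of $T$) incident to $I$ has its other endpoint in $A$, while every edge incident to $C$ has its other endpoint in $A\cup C$. In particular, $A$ separates $I$ from $C$ in $T$, which is the structural hook of the whole argument.

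The first step is to let $I_1$ be the set of leaves of $T$ lying in $I$ and set $I_2 := I\setminus I_1$. The hypothesis that $T$ has at most $k$ leaves immediately gives $|I_1|\le k$ and hence $|I_2|\ge t-1$. The second step is to analyze $F := T[A\cup C]$, the forest obtained from $T$ by deleting $I$. I would count the deleted edges two ways: on the one hand, since $I$ is independent, deleting $I$ removes exactly $\sum_{v\in I} d_T(v)\ge |I_1|+2|I_2|$ edges from $T$; on the other hand, the remaining forest has $|A\cup C|-p$ edges, where $p$ is its number of components. Equating these two counts against $|E(T)|=n-1$ yields $p\ge |I_2|+1\ge t$. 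The third step uses that $A$ separates $C$ from $I$: any path in $T$ from a $C$-vertex to an $I$-vertex must enter $A$, so every component of $F$ contains at least one $A$-vertex, giving $p\le |A|=t$. Therefore $p=t$, and all the intermediate inequalities are tight: $|I_2|=t-1$, $|I_1|=k$, every $v\in I_2$ satisfies $d_T(v)=2$, and each component of $F$ contains exactly one $A$-vertex.

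For the contradiction, observe that since all $k$ leaves of $T$ lie in $I$, every vertex of $A\cup C$ has tree-degree at least $2$ in $T$. Writing $F=F_1\cup\cdots\cup F_t$ with $a_j$ the unique $A$-vertex of $F_j$ and $C_j:=F_j\cap C$, for any $v\in C_j$ we have $d_{F_j}(v)=d_T(v)\ge 2$, because $v$ has no neighbor in $I$ in $H$. Hence no vertex of $C_j$ is a leaf of the tree $F_j$. If $|F_j|\ge 2$, then the tree $F_j$ would need at least two leaves but has only $a_j$ as a candidate, a contradiction. Thus $C_j=\emptyset$ for every $j$, forcing $C=\emptyset$ and $n=k+2t-1$, which contradicts $|V(K_{n-k-2t+1})|\ge 1$ in the regime where the lemma is applied.

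The main obstacle is the double-counting step that produces the equality $p=t$: the ``at most $k$ leaves'' hypothesis is global but has to be funneled into a local statement about the forest $F$ through $|I_1|\le k$, and matching that lower bound against the structural upper bound $p\le t$ is what collapses the components of $F$ to singletons. Once that collapse happens, the conflict with $|C|\ge 1$ is immediate.
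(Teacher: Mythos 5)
Your proof is correct, and it takes a genuinely different route from the paper's. The paper works on the small side of the partition: assuming a spanning $k$-ended tree $T$ exists, it asserts that the independent set $I$ (its $V_2$) contains at least $t$ internal vertices of $T$, forms the induced forest on $A$ together with $t$ such vertices, and notes that each of these vertices has all its $T$-neighbours in $A$, so the forest has at least $2t$ edges on $2t$ vertices, forcing a cycle. You instead delete $I$, count the components $p$ of the forest $T-I$ in two ways, and squeeze $t \le |I_2|+1 \le p \le t$, so that all $k$ leaves of $T$ lie in $I$ and each component of $T-I$ contains exactly one vertex of $A$; since every $C$-vertex then has degree at least $2$ inside its component, each component collapses to a single $A$-vertex, contradicting $|C|=n-k-2t+1\ge 1$. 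What your version buys: the paper's opening claim that $I$ has at least $t$ internal vertices follows from the leaf bound only when at most $k-1$ leaves of $T$ lie in $I$ (the direct count gives $t-1$); the remaining case, all $k$ leaves in $I$, is precisely the tight case your component analysis handles, and it is also where the hypothesis $|C|\ge 1$ must enter --- indeed the statement fails for $n=k+2t-1$, since $K_t \nabla (k+t-1)K_1$ does admit a spanning $k$-ended tree. So your argument is a bit longer but more complete, and your explicit use of $n\ge k+2t$ is not a gap: it is needed for the lemma to be true at all and holds comfortably in the paper's applications, where $n\ge 6k+6t-1$.
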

    \begin{proof}
        Let $V_1=V(K_t)$ and $V_2=V((k+t-1)K_1)$. Suppose to the contrary that $T$ is a spanning $k$-ended tree of $G$.  Then $V_2$ has at least $t$ internal vertices of $T$, say $v_1,\ldots,v_t$. Let $V_2'=\{ v_1, \ldots, v_t \}$, and let $T'=T[V_1 \cup V_2']$ denote the subgraph of $T$ induced by $V_1\cup V_2'$. Clearly, $T'$ is a forest of order $2t$.  Since $N_T(v_i)\subseteq V_1\subseteq V(T')$, we have $d_{T'}(v_i) \ge 2$ for $1 \le i \le t$, and hence $e(T')\geq \sum_{i=1}^{t}d_{T'}(v_i) \ge 2 t=|V(T')|$. Therefore, we may conclude that $T'$ contains a cycle, which is impossible.
    \end{proof}

        \begin{lem}\label{lem::2}
Let $k \ge 2$ and $t\ge 1$ be integers. Suppose that $G$ is an $(n-1)$-closed graph of order $n$ with $n\ge 6k+6t-1$. If 
            \begin{equation}
              e(G) \ge  {n-k-t \choose 2}+(k+t-1)^2+k+t,
              \nonumber
            \end{equation}
            then $\omega (G) \ge n-k-t+1$.
	\end{lem}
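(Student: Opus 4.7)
Proof plan. Assume for contradiction that $\omega := \omega(G) \le n-k-t$, and set $s := k+t$. Fix a maximum clique $S$ of size $\omega$ and let $R := V(G) \setminus S$, so $|R| = n - \omega \ge s$. By the maximality of $S$, each $v \in R$ has at least one non-neighbor $u_v \in S$; combining $d(u_v) \ge \omega - 1$ with the $(n-1)$-closure hypothesis yields
$$d(v) \le n - \omega - 1 \quad \text{for every } v \in R.$$
Summing over $R$ and using $\sum_{v \in R} d(v) = 2 e(R) + e(R, S)$ together with the trivial $e(R,S) \le (n-\omega)(\omega-1)$ produces two upper bounds on $e(G)$:
\begin{align*}
\text{(A)} \quad e(G) &\le \binom{\omega}{2} + (n-\omega)(n-\omega-1), \\
\text{(B)} \quad e(G) &\le \binom{\omega}{2} + \tfrac{(n-\omega)(n-2)}{2}.
\end{align*}

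Write $T := \binom{n-s}{2} + (s-1)^2 + s$. A direct calculation shows that bound (A) equals $T-1$ at $\omega = n - s$, so the contradiction is immediate at the boundary. For $\omega$ decreasing from $n-s$, the plan is to use (A) throughout $[n - 3s + 1, n - s]$ --- where (A), a convex quadratic in $\omega$ with minimum near $2n/3$, has both endpoint values strictly below $T$ --- and to use (B) throughout a window centered at $\omega = 3s - 1$, where expanding $(\text{B}) - T$ as a quadratic in $\omega$ reveals roots approximately $3s - 1 \pm \sqrt{s+2}$ when $n = 6s - 1$. The lower bound $n \ge 6s - 1$ in the hypothesis is precisely what makes these two windows overlap, so together they cover every $\omega \in [3s - 1, n - s]$.

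The main obstacle will be the small-$\omega$ regime, which I would dispatch by passing to the complement $\bar G$. The $(n-1)$-closure condition is equivalent to $d_{\bar G}(u) + d_{\bar G}(v) \ge n$ on every edge $uv$ of $\bar G$, so the set $H := \{v : d_{\bar G}(v) \ge n/2\}$ is a vertex cover of $\bar G$. Counting the contributions to $\sum_{v \in H} d_{\bar G}(v)$ gives $|H| \cdot (n/2) \le 2 \bar e(G) \le 2\bigl(\binom{n}{2} - T\bigr)$, whence $|H| \le 4\bigl(\binom{n}{2} - T\bigr)/n$, and since $V \setminus H$ is an independent set in $\bar G$ (hence a clique in $G$), $\omega(G) \ge n - |H|$. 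Substituting $\binom{n}{2} - T = sn - 3s^2/2 + s/2 - 1$ and using $n \ge 6s - 1$ forces $\omega \ge 3s - 1$, placing us back inside the window already controlled by (B).

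The delicate technical step will be the algebraic bookkeeping: verifying that the windows of applicability of (A), (B), and the complement bound indeed jointly tile $[1, n - k - t]$ without an integer gap, which is precisely where the hypothesis $n \ge 6k + 6t - 1$ enters. Concatenating the three regimes forces $e(G) < T$ for every $\omega \le n - s$, contradicting the hypothesis and yielding $\omega(G) \ge n - k - t + 1$.
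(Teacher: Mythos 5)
Your proposal is correct in substance but organizes the argument differently from the paper. The paper fixes a maximum clique $C$ chosen to contain all vertices of degree at least $\frac{n-1}{2}$ (pairwise adjacent by $(n-1)$-closedness) and splits on $r=|C|$: for $r\le \frac{n}{3}+k+t$ it uses $d(u)\le\frac{n-2}{2}$ for $u\notin C$, and for $\frac{n}{3}+k+t<r\le n-k-t$ it uses $d(u)\le n-r-1$, each case contradicting the edge hypothesis directly. Your bound (A) is exactly the paper's second case; but instead of the special clique choice you take an arbitrary maximum clique, add the weaker bound (B), and close the small-$\omega$ regime with a vertex-cover count in the complement — which is essentially the paper's ``high-degree vertices form a clique'' observation, exploited by edge counting rather than by placing those vertices inside the clique. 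I checked your bookkeeping with your notation $s=k+t$ and $T$: one has $\mathrm{(A)}-T=-\frac{(n-s-\omega)(3\omega-n-3s+1)}{2}-1$, so (A)$<T$ on $[n-3s+1,\,n-s]$, and $\mathrm{(B)}-T=\frac{1}{2}\bigl(\omega^{2}+(1-n)\omega+(2s-1)n-3s^{2}+s-2\bigr)$ is negative at both $\omega=3s-1$ and $\omega=n-3s+1$ whenever $n\ge 6s-1$, so by convexity (B)$<T$ on the whole interval between them; hence (A) and (B) jointly cover $[3s-1,\,n-s]$. The one caveat is your complement step: the real-number bound $\omega\ge n-\frac{4\left(\binom{n}{2}-T\right)}{n}$ evaluates to $3s-1+\frac{4-s}{6s-1}$ at $n=6s-1$, which is slightly \emph{below} $3s-1$ when $s\ge 5$, so the stated deduction ``forces $\omega\ge 3s-1$'' needs the integrality of $|H|$ (or of $\omega$), or the observation that the (B) window extends below $3s-1$, to round up — a point your sketch omits but which is easily supplied. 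With that, the three regimes do tile $[1,\,n-k-t]$ and the contradiction goes through; your route trades the paper's cleaner two-case analysis for independence from the special clique choice, at the cost of an extra counting argument and more delicate window analysis.
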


    \begin{proof}
        Since $G$ is $(n-1)$-closed, any two vertices of degree at least $\frac{n-1}{2}$ must be adjacent in $G$. Let $C$ be the vertex set of a maximum clique in $G$ containing all vertices of degree at least $\frac{n-1}{2}$, and let $H$ be the subgraph of $G$ induced by $V(G) \backslash C$. Let $|C|=r$. We consider the following two situations.
   
{\flushleft\textit{Case 1.}} $1 \le r \le \frac{n}{3}+k+t$.   
        
Note that $d_{C}(u) \le r-1$ and $d_{G}(u) \le \frac{n-2}{2}$ for all $u \in V(H)$. We have 
	  \begin{equation*}
	  \begin{aligned}
            e(G)&=e(G[C])+e(H)+e(V(H),C)\\
          &={r \choose 2}+\frac{1}{2}\left(\sum_{u\in V(H)} d_{G}(u)-\sum_{u\in V(H)} d_{C}(u)\right)+\sum_{u \in V(H)} d_{C}(u)\\
            &={r \choose 2}+\frac{1}{2}\left(\sum_{u\in V(H)} d_{G}(u)+\sum_{u\in V(H)} d_{C}(u)\right)\\
            &\le {r \choose 2}+\frac{1}{2}\left((n-r)\cdot \frac{n-2}{2}+(n-r)(r-1)\right)\\
            &=\frac{n+2}{4}\cdot r+\frac{n^2}{4}-n\\
            &\le \frac{n+2}{4}\cdot  \left(\frac{n}{3}+k+t\right)+\frac{n^2}{4}-n\\
            &=\frac{4n^2+(3k+3t-10)n+6(k+t)}{12}\\
            &={n-k-t \choose 2}+(k+t-1)^2+k+t\\
            &~~~~-\frac{(n-6k-6t+1)(2n-3k-3t+2)+3(k+t)+10}{12}\\
            &\leq  {n-k-t \choose 2}+(k+t-1)^2+k+t-\frac{3(k+t)+10}{12}~~\mbox{(as $n \ge 6k+6t -1$)}\\
            &<e(G),
        \end{aligned}
	  \end{equation*}
        which is a contradiction.

{\flushleft \textit{Case 2.}} $\frac{n}{3}+k+t <r \le n-k-t$.   

Note that, for each  $u \in V(H)$, $d_{G}(u) \le n-r-1$, since otherwise $u$ would be adjacent to all vertices of $C$ because $G$ is $(n-1)$-closed, contrary to the maximality of $C$. Then we have
        \begin{equation*}
	  \begin{aligned}
             e(G)&=e(G[C])+e(C,V(H))+e(H)\\
             &={r \choose 2}+\frac{1}{2}\left(\sum_{u\in V(H)} d_{G}(u)+\sum_{u\in V(H)} d_{C}(u)\right)\\
             &\le {r\choose 2}+\sum_{u \in V(H)}d_G(u)\\
             &\le {r\choose 2}+(n-r)(n-r-1)\\
             &={n-k-t \choose 2}+(k+t-1)^2+k+t\\
             &~~~~-\frac{(n-k-t-r)(3r-n-3k-3t+1)}{2}-1\\
             &\le {n-k-t \choose 2}+(k+t-1)^2+k+t-1~~\mbox{(as $\frac{n}{3}+k+t <r \le n-k-t$)}\\
             &<e(G),
        \end{aligned}
	  \end{equation*}
       which is also impossible.

  Therefore, we conclude that $\omega (G) \ge |C|=r\geq n-k-t+1$, and the result follows.        
    \end{proof}

    A \textit{matching} $M$ of $G$ is a set of edges that do not have a set of common vertices.  We say that a matching $M$ \textit{saturates} a vertex $v$ in $G$, or $v$ is $M$-\textit{saturated} in $G$, if some edge of $M$ is incident with $v$. 
    

    \begin{lem}(Hall \cite{Ha})\label{lem::3}
		Let $G$ be a bipartite graph with bipartition $(X, Y)$. Then $G$ contains a matching that saturates every vertex in $X$ if and only if
  $$
  |N(S)| \ge |S|
  $$ 
  for all $S \subseteq X$.
	\end{lem}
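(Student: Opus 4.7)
My plan is to prove Hall's theorem by verifying the easy direction directly and establishing the hard direction by strong induction on $|X|$, splitting into two cases depending on whether Hall's condition holds with slack on every proper nonempty subset or is tight on some such subset.

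First, for necessity, I would observe that if $M$ is a matching saturating $X$, then for any $S\subseteq X$ each vertex of $S$ is matched by $M$ to a distinct vertex in $Y$, and all these matched vertices belong to $N(S)$ since the matching edges go between $S$ and $N(S)$; hence $|N(S)|\ge |S|$. This direction requires no induction.

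For sufficiency, I would proceed by induction on $n=|X|$. The base case $n=1$ is immediate: Hall's condition applied to $S=X$ gives $|N(X)|\ge 1$, so $X$'s unique vertex has a neighbor and can be matched. For the inductive step, assume the statement holds for bipartite graphs whose left side has fewer than $n$ vertices, and let $G$ satisfy Hall's condition with $|X|=n\ge 2$. I would distinguish two cases. \textbf{Case 1 (strict condition):} For every nonempty proper subset $S\subsetneq X$, one has $|N(S)|\ge |S|+1$. In this case pick an arbitrary $x\in X$; by Hall's condition applied to $\{x\}$ there exists $y\in N(x)$. Set $X'=X\setminus\{x\}$, $Y'=Y\setminus\{y\}$, and consider $G'=G[X'\cup Y']$. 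For any $S'\subseteq X'$, the strict inequality gives $|N_{G'}(S')|\ge |N_G(S')|-1\ge(|S'|+1)-1=|S'|$, so Hall's condition survives in $G'$ and the induction hypothesis yields a matching $M'$ saturating $X'$; then $M'\cup\{xy\}$ saturates $X$. \textbf{Case 2 (tight subset):} There is a nonempty proper subset $S_0\subsetneq X$ with $|N(S_0)|=|S_0|$. Apply induction to the bipartite graph $G_1$ induced on $S_0\cup N(S_0)$, which satisfies Hall's condition because Hall's condition for subsets of $S_0$ in $G_1$ is identical to that in $G$; this yields a matching $M_1$ saturating $S_0$. Now consider $G_2$ induced on $(X\setminus S_0)\cup(Y\setminus N(S_0))$ and verify Hall's condition there: for $T\subseteq X\setminus S_0$,
\begin{equation*}
|N_{G_2}(T)|=|N_G(T)\setminus N(S_0)|\ge |N_G(T\cup S_0)|-|N(S_0)|\ge |T\cup S_0|-|S_0|=|T|.
\end{equation*}
The induction hypothesis then supplies a matching $M_2$ saturating $X\setminus S_0$, and $M_1\cup M_2$ is the desired matching.

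The main obstacle is the verification in Case~2 that Hall's condition passes to the smaller bipartite graph $G_2$ after removing the tight pair $(S_0,N(S_0))$; the short chain of inequalities above is the crux of the argument and relies on applying Hall's condition in $G$ to the union $T\cup S_0$ rather than $T$ alone. Once this passage is justified the two inductive calls combine with no further complications, and both cases are genuinely needed since in Case~1 no tight subset is available to peel off while in Case~2 the single-vertex removal of Case~1 could destroy Hall's condition at the tight subset.
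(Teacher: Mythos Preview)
Your proof is correct; it is the standard Halmos--Vaughan style induction on $|X|$, and both the strict-slack case and the tight-subset case are handled cleanly, with the key inequality for $G_2$ justified via $N_G(T\cup S_0)=N_G(T)\cup N(S_0)$ (so your ``$\ge$'' there is in fact an equality). The paper itself gives no proof of this lemma --- it is merely quoted as a classical result with a citation to Hall --- so there is nothing to compare your argument against.
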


Now we are in a position to give the proof of Theorem \ref{thm::6}.


{\flushleft \it Proof of Theorem \ref{thm::6}.}  
Suppose that $G$ does not have a  spanning $k$-ended tree.  Let $G'=C_{n-1}(G)$ be the $(n-1)$-closure of $G$. By Theorem \ref{thm::5}, $G'$ contains no spanning $k$-ended trees. As $G$ is $t$-connected,  $G'$ is also $t$-connected,  and 
        $$e(G') \ge e(G) \ge {n-k-t \choose 2}+(k+t-1)^2+k+t.$$ 
        By Lemma \ref{lem::2}, we have  $\omega (G') \ge n-k-t+1$. Let $C$ be a maximum clique of $G'$, and let $H$ be the subgraph of $G'$ induced by $V(G') \backslash C$. We have the following two claims.

        \begin{claim}\label{claim::1}
            $\omega (G') =|C|=n-k-t+1$,  and $V(H)$ is an independent set.
        \end{claim}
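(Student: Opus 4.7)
The plan is to combine Lemma \ref{lem::2} with Win's theorem (Theorem \ref{thm::4}), applied to $G'$, to deduce both assertions of the claim simultaneously and with no case analysis.

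Since $G'$ has already been observed to be $t$-connected (the closure operation only adds edges to $G$) and by assumption contains no spanning $k$-ended tree, Theorem \ref{thm::4} immediately forces $\alpha(G') \ge k+t$. Let $I\subseteq V(G')$ be an independent set with $|I|\ge k+t$. Because $C$ is a clique, $|I\cap C|\le 1$, and therefore
\[
|I \cap V(H)| \;\ge\; |I|-1 \;\ge\; k+t-1.
\]
On the other hand, Lemma \ref{lem::2} gives $|C|\ge n-k-t+1$, hence $|V(H)| = n-|C|\le k+t-1$. The two bounds must match, so $|V(H)|=k+t-1$, which gives $\omega(G')=|C|=n-k-t+1$; moreover they force $I\cap V(H)=V(H)$, which says that $V(H)$ is itself an independent set of $G'$.

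I do not foresee any genuine obstacle: the whole argument reduces to one invocation of Win's theorem followed by the trivial clique-versus-independent-set inequality $|I\cap C|\le 1$, matched against the upper bound on $|V(H)|$ coming from Lemma \ref{lem::2}. The only point to double-check is that Theorem \ref{thm::4} actually applies to $G'$, which needs only the $t$-connectivity of $G'$ recorded in the setup just before the claim.
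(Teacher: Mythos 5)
Your proposal is correct and follows essentially the same route as the paper: invoke Win's theorem (Theorem \ref{thm::4}) on the $t$-connected closure $G'$ to get $\alpha(G')\ge k+t$, and play this off against $\omega(G')\ge n-k-t+1$ from Lemma \ref{lem::2} via the observation that an independent set meets the clique $C$ in at most one vertex. You merely spell out the counting (that the maximum independent set must contain all of $V(H)$) slightly more explicitly than the paper does.
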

        
{\flushleft \it Proof of Claim \ref{claim::1}.}  
 Since $G'$ has no spanning $k$-ended trees, by Theorem \ref{thm::4}, we have $\alpha(G') \ge t+k$, and hence $\omega (G') \le n-k-t+1$. Combining this with $\omega (G') \ge n-k-t+1$, we obtain $\omega (G')=|C|=n-k-t+1$, and it follows that  $V(H)$ is an independent set because $\alpha(G')\geq t+k$. \qed\vspace{3mm}

According to Claim \ref{claim::1}, the vertex set of $G'$ can be partitioned as $V(G')=C\cup V(H)$, where $C$ is a clique of size $n-k-t+1$, and $V(H)$ is an independent set of size $k+t-1$. Moreover, for  any non-empty subset $U$ of $V(H)$, $N(U)\subseteq C$, and $|N(U)|\geq t$ because $G'$ is $t$-connected. To determine the structure of $G'$, it suffices to analyse the edges between $C$ and $V(H)$. 

        \begin{claim}\label{claim::2}
            $|N(V(H))|=t$, and consequently, $G' \cong K_t \nabla (K_{n-k-2t+1} \cup (k+t-1)K_1)$.
        \end{claim}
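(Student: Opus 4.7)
The plan is to first establish $|N(V(H))|=t$ by contradiction, and then deduce the structural conclusion. Suppose toward contradiction that $|N(V(H))|\ge t+1$; I will construct a spanning $k$-ended tree of $G'$, contradicting Theorem~\ref{thm::5} together with the standing assumption that $G$ has no spanning $k$-ended tree.

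First I would observe that $|N_{G'}(v)|\ge t$ for every $v\in V(H)$, a direct consequence of the $t$-connectivity of $G'$ together with the independence of $V(H)$ (removing $N_{G'}(v)$ isolates $v$). I would then consider the bipartite graph $B$ between $V(H)$ and $N(V(H))$ inherited from $G'$, and apply K\"onig's theorem (equivalent to Lemma~\ref{lem::3}) to deduce that $B$ admits a matching $M=\{v_iu_i\}_{i=1}^{t+1}$ of size $t+1$. Any vertex cover $X=X_1\cup X_2$ of $B$, with $X_1\subseteq V(H)$ and $X_2\subseteq N(V(H))$, satisfies $|X|\ge t+1$: if $X_1=\emptyset$ then $N(V(H))\subseteq X_2$ forces $|X_2|\ge t+1$; otherwise, picking any $v\in V(H)\setminus X_1$ gives $N(v)\subseteq X_2$, so $|X_2|\ge t$ and $|X|\ge t+1$.

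Next I would use $M$ and the clique structure of $C$ to build the required spanning tree. For $t=1$, the construction is direct: take a Hamilton path of $C$ with $u_1,u_2$ as its two endpoints, prepend $v_1$ and append $v_2$ to get a Hamilton path of $C\cup\{v_1,v_2\}$, and attach each remaining vertex of $V(H)$ as a leaf at some neighbor in $C$; this produces a spanning tree with exactly $k$ leaves. For $t\ge 2$, each $v_i$ has a second neighbor $u'_i\in N_{G'}(v_i)\setminus\{u_i\}$ since $|N(v_i)|\ge t\ge 2$; the aim is to designate $t$ of the matched vertices $v_i$ as internal by subdividing the edge $u_iu'_i$ of a suitably chosen Hamilton path of $C$ through the insertion of $v_i$, attaching the remaining $V(H)$-vertices as leaves, and thus obtaining at most $k$ leaves in total.

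The main obstacle is the construction for $t\ge 2$: one must simultaneously realize all the pairs $\{u_i,u'_i\}$ as consecutive edges of a single Hamilton path of $C$. The large size $|C|=n-k-t+1$, guaranteed by $n\ge\max\{6k+6t-1,\,k^2+tk+t+1\}$, supplies the needed flexibility, but a case split is likely required: when $|N(V(H))|\ge 2t$ the $u'_i$ can be chosen pairwise distinct and distinct from the $u_i$, reducing the requirement to a matching of $t$ edges which trivially extends to a Hamilton path in the clique; when $t+1\le|N(V(H))|<2t$ the chosen pairs must share vertices, and one has to check that the resulting ``requirement graph'' on $N(V(H))$ has maximum degree at most two and contains no cycle. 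Once $|N(V(H))|\ge t+1$ is ruled out, the bound $|N(V(H))|\ge t$ from $t$-connectivity forces $|N(V(H))|=t$; then for every $v\in V(H)$, since $|N_{G'}(v)|\ge t$ and $N(v)\subseteq N(V(H))$, equality $N(v)=N(V(H))$ follows, giving $G'\cong K_t\nabla(K_{n-k-2t+1}\cup(k+t-1)K_1)$.
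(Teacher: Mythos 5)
Your setup is sound and runs parallel to the paper's: every $v\in V(H)$ has at least $t$ neighbours in $C$ by $t$-connectivity, a matching $M=\{v_iu_i\}_{i=1}^{t+1}$ exists by K\"onig/Hall under the assumption $|N(V(H))|\ge t+1$ (your vertex-cover bound silently skips the case $X_1=V(H)$, but that is harmless since $|V(H)|=k+t-1\ge t+1$), and the contradiction is reduced to building a suitable spanning tree; the $t=1$ construction and the final deduction from $|N(V(H))|=t$ to $G'\cong K_t\nabla(K_{n-k-2t+1}\cup(k+t-1)K_1)$ are fine. However, the $t\ge 2$ construction, which is where the real work lies, has an off-by-one error in the leaf count: if only $t$ vertices of $V(H)$ are inserted as internal vertices of a Hamilton path of $C$ whose two endpoints lie in $C$, and the remaining $k+t-1-t=k-1$ vertices of $V(H)$ are attached as pendants at arbitrary neighbours, the resulting tree has $(k-1)+2=k+1$ leaves, not at most $k$. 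To reach $k$ you must either involve all $t+1$ matched vertices --- this is what the paper does, producing a Hamilton path of $G'[C\cup X]$ whose two \emph{endpoints} are vertices of $X$ (so two of the $v_i$ are endpoints, $t-1$ are internal, and only $k-2$ pendants remain, giving exactly $k$ leaves) --- or additionally force a pendant onto an endpoint of the path, a constraint your sketch never addresses.

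The more serious gap is that the crux is left unproven: you state that all pairs $\{u_i,u_i'\}$ must be realized simultaneously as edges of a single Hamilton path of $C$, and that this needs the ``requirement graph'' to be a disjoint union of paths, but you only say that ``a case split is likely required'' and that ``one has to check'' the relevant conditions. No argument is given that the second neighbours $u_i'$ can always be chosen to avoid a vertex of degree three, a repeated pair (e.g.\ two of the $v_i$ whose neighbourhoods are the same $t$-set), or a cycle, precisely in the regime $t+1\le |N(V(H))|<2t$ that you flag; the large size of $C$ does not by itself help, since the obstruction lives entirely inside $N(V(H))$. This is exactly the difficulty the paper's proof is built to overcome: it takes maximal $M$-alternating paths $P_1,\dots,P_s$ covering $X\cup Y$, notes that the terminal vertex $v_{i_j}$ of $P_j$ has at least $t-i_j\ge s-j-1$ neighbours in $Z=C\setminus Y$, and uses these as distinct connectors to splice the alternating paths through the clique into one Hamilton path of $G'[C\cup X]$ with both ends in $X$. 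Without an argument of this kind (or a correct substitute), your proof that $|N(V(H))|=t$ is incomplete.
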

{\flushleft \it Proof of Claim \ref{claim::2}.}  
Let $X=\{v_1,\ldots,v_{t+1}\}$ be a subset of $V(H)$ such that 
$$|N(X)|=\max\{|N(U)|: U\subseteq V(H)~\mbox{and}~|U|=t+1\}.$$ 
To prove the claim, it remains to show that $|N(X)|=t$. By contradiction, assume that $|N(X)|\geq t+1$. We shall prove that $G'[C\cup X]$ has a Hamilton path. Let $H'$ denote the bipartite subgraph of $G'$ with bipartition $X\cup N(X)$ induced by the edges between $X$ and $N(X)$. For any non-empty subset $S$ of $X$, if $|S|\leq t$, then $N(S)\geq t\geq |S|$ by above arguments, and if $|S|=t+1$, i.e., $S=X$, then $|N(S)|=|N(X)|\geq t+1=|S|$. By Lemma \ref{lem::3}, there exists a matching in $H'$ (and so in $G'$), say $M=\{v_1u_1, v_2u_2, \ldots, v_{t+1}u_{t+1}\}$ ($u_i\in N(X)\subseteq C$ for $1\leq i\leq t+1$), that saturates every vertex in $X$. Let $Y=\{u_1, u_2, \ldots, u_{t+1}\}$ and $Z=C\setminus Y=\{u_{t+2}, \ldots, u_{n-k-t+1}\}$.  Let $P_1,\ldots,P_s$ be a sequence of vertex-disjoint paths in $H'$ that starts from a vertex of $Y$ and ends at a vertex of $X$ with edges occurring alternately in $M$ and $E(H')\setminus M$ such that the following three conditions hold:
\begin{enumerate}[(i)]
    \item $X\cup Y=V(P_1)\cup\cdots\cup V(P_s)$;
    \item $P_{1}$ is maximal in $H'$;
    \item if $s\geq 2$, then $P_{i}$ is maximal in $H'-\cup_{j=1}^{i-1} P_{j}$ for $2\leq i\leq s$.
\end{enumerate}
Clearly, $1\leq s\leq t+1$. If $s=1$, then $P_1$ is a Hamilton path in $G'[X\cup Y]$ that starts from a vertex of $Y$ and ends at a vertex of $X$, and so $G'[C\cup X]$ also contains a Hamilton path because $C$ is a clique. Now suppose that $s\geq 2$. Without loss of generality, we denote 
$$
\begin{aligned}
P_1&=u_1v_1 \cdots u_{i_1}v_{i_1},\\
P_2&=u_{i_{1}+1}v_{i_{1}+1} \cdots u_{i_2}v_{i_2},\\
&~~~~~~~~~~~~~\vdots\\
P_s&=u_{i_{s-1}+1}v_{i_{s-1}+1} \cdots u_{i_s}v_{i_s},
\end{aligned}
$$
where $1\leq i_1<i_2<\ldots<i_{s-1}<i_s=t+1$.
Recall that  every vertex of $X\subseteq V(G')$ has at least $t$ neighbors in $N(X)\subseteq C$. According to  (i)--(iii), we assert that $v_{i_j}$ has at least $t-i_j$ neighbors in $N(X)\setminus Y\subseteq Z$ for each $j\in \{1,\ldots,s-1\}$. Also note that $0\leq t-i_{s-1} < t-i_{s-2} < \cdots < t-i_{1}$. If $s=2$, then $P_1=u_1v_1\cdots u_{i_1}v_{i_1}$ and $P_2=u_{i_1+1}v_{i_1+1}\cdots u_{t+1}v_{t+1}$. As $C$ is a clique, it is easy to see that $v_{t+1}P_2u_{i_1+1} u_{t+2}\cdots u_{n-k-t+1} u_1P_1v_{i_1}$ is a Hamilton path in $G'[C\cup X]$ that starts from $v_{t+1}$ and ends at $v_{i_1}$. If $s\geq 3$, then it follows from the inequality  $0\leq t-i_{s-1} < t-i_{s-2} < \cdots < t-i_{1}$ that  $$|N_Z(v_{i_j})|\geq t-i_{j}\geq s-j-1,$$
where $1\leq j\leq s-2$. Hence, there are  $s-2$ distinct vertices in $Z$, say  $u_{t+2},\ldots,u_{t+s-1}$, such that $u_{t+j+1}\in N_Z(v_{i_j})$ for $1\leq j\leq s-2$. Considering that $C$ is a clique, we can verify that 
$v_{t+1}P_su_{i_{s-1}+1}u_{t+s}\cdots u_{n-k-t+1}u_1P_1v_{i_1}u_{t+2}u_{i_1+1}P_2v_{i_{2}}u_{t+3}\cdots P_{s-2}v_{i_{s-2}}u_{t+s-1}$ $u_{i_{s-2}+1}P_{s-1}v_{i_{s-1}}$ 
is exactly a Hamilton path in $G'[C\cup X]$ that starts from $v_{t+1}$ and ends at $v_{i_{s-1}}$. Thus we may conclude that $G'[C\cup X]$ always has a Hamilton path, say $P$.  Observe that there are exactly $k-2$ vertices in $V(H)\setminus X=V(G')\setminus (C\cup X)$, and all of them have at least $t$ neighbors in $C$. By connecting each of these vertices to an arbitrary neighbor on the Hamilton path $P$, we immediately obtain a spanning $k$-ended tree in $G'$, contrary to the assumption.
Consequently, $|N(X)|=t$, and this proves the claim. \qed\vspace{3mm}

According to Claim \ref{claim::2}, $G' \cong K_t \nabla (K_{n-k-2t+1} \cup (k+t-1)K_1)$. Note that $K_t \nabla (K_{n-k-2t+1} \cup (k+t-1)K_1)$ is $t$-connected. By Lemma \ref{lem::1}, $K_t \nabla (K_{n-k-2t+1} \cup (k+t-1)K_1)$ does not have a spanning $k$-ended tree. Also, from $n \ge k^2+t k+t+1$ we obtain
$$
\begin{aligned}
e(K_t \nabla (K_{n-k-2t+1} \cup (k+t-1)K_1))&= {n-k-t+1 \choose 2}+(k+t-1)t\\
&\ge {n-k-t \choose 2}+(k+t-1)^2+k+t.
\end{aligned}
$$ 

This completes the proof.\qed\vspace{3mm}

In order to prove Theorem \ref{thm::7}, we need two additional lemmas.

 \begin{lem}(Hong, Shu and Fang\cite{HSF}; Nikiforov \cite{N})\label{lem::4}
   	Let $G$ be a graph of order $n$ with minimum degree $\delta(G)\geq k$. Then
    \begin{equation*}\label{equ::3}
	\rho(G)\leq \frac{k-1+\sqrt{(k+1)^2+4(2e(G)-nk)}}{2}.
\end{equation*}
     \end{lem}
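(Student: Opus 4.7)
My plan is to proceed via a short Perron--Frobenius argument that sums the eigenvalue equation against the all-ones vector, rather than against a specific extremal vertex. Let $\mathbf{x} = (x_1, \ldots, x_n)^T$ be a non-negative eigenvector of $A(G)$ associated with $\rho = \rho(G)$, and set $s = \sum_{v} x_v > 0$. The first step is to record two identities obtained by left-multiplying $A\mathbf{x} = \rho\mathbf{x}$ successively by $\mathbf{1}^T$ and by $\mathbf{1}^T A$, namely $\rho s = \sum_v d(v) x_v$ and
$$\rho^2 s \;=\; \mathbf{1}^T A^2 \mathbf{x} \;=\; \sum_{v} x_v\, m(v),$$
where $m(v) = \sum_{u \sim v} d(u)$ is the sum of the degrees of the neighbours of $v$.

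Next I would exploit the hypothesis $\delta(G) \geq k$ to bound $m(v)$ uniformly in $v$. Each of the $n - 1 - d(v)$ vertices outside $N[v]$ has degree at least $k$, so
$$m(v) \;=\; 2e(G) - d(v) - \sum_{w \notin N[v]} d(w) \;\leq\; 2e(G) - d(v) - (n - 1 - d(v))k \;=\; 2e(G) - (n-1)k + (k-1)d(v).$$
Substituting this into the identity above, using $x_v \geq 0$ term by term, and then invoking $\sum_v d(v) x_v = \rho s$ to collapse the $(k-1)d(v) x_v$ sum, I obtain
$$\rho^2 s \;\leq\; \bigl(2e(G) - (n-1)k\bigr)\,s + (k-1)\rho\,s.$$

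To finish I would divide both sides by $s > 0$, leaving the scalar quadratic inequality $\rho^2 - (k-1)\rho - \bigl(2e(G) - (n-1)k\bigr) \leq 0$. Solving for $\rho$ and applying the algebraic identity $(k-1)^2 + 4(2e(G) - (n-1)k) = (k+1)^2 + 4(2e(G) - nk)$ rewrites the positive root of that quadratic exactly into the form asserted by the lemma.

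The main obstacle I anticipate is noticing the right identity in the first step: it is tempting to start from $\rho^2 \leq m(v^\ast)$ at the Perron-maximal vertex $v^\ast$, but this weakens the $(k-1)d(v^\ast)$ correction to something that is not directly comparable to $(k-1)\rho$ (since $d(v^\ast) \geq \rho$, the bound goes in the wrong direction). Summing against $\mathbf{1}$ is what makes the crucial cancellation $\sum_v (k-1) d(v) x_v = (k-1)\rho s$ appear automatically; once this identity is spotted, the degree inequality and the quadratic formula are both mechanical.
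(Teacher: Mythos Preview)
The paper does not supply its own proof of this lemma; it is merely quoted from \cite{HSF,N}. Your argument is correct and self-contained: the two identities $\rho s=\sum_v d(v)x_v$ and $\rho^2 s=\sum_v m(v)x_v$ are exactly what one gets from $\mathbf{1}^T A\mathbf{x}$ and $\mathbf{1}^T A^2\mathbf{x}$, the degree bound on $m(v)$ is checked correctly, and the algebraic rewrite of the discriminant is right. This is in fact essentially Nikiforov's original proof in \cite{N}, so nothing further is needed.
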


Let $A=(a_{ij})_{n \times n}$ and $B=(b_{ij})_{n \times n}$ be two square matrices of order $n$ with real entries. Define $A \le B$ if  $a_{ij} \le b_{ij}$ for $1 \le i, j \le n$, and $A < B$ if $A \le B$ and $A \neq B$.

    \begin{lem}(Berman and Plemmons \cite{BP}; Horn and Johnson \cite{HJ})\label{lem::5}
	Let $O$ be an $n \times n$ zero matrix, $A=(a_{ij})$ and $B=(b_{ij})$ be two $n \times n$ matrices with spectral radius $\rho (A)$ and $\rho (B)$, respectively. If $O \le A \le B$, then $\rho (A) \le \rho (B)$. Furthermore, if $B$ is irreducible and $O \le A < B$, then $\rho (A) < \rho(B)$. 
    \end{lem}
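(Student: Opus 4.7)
The plan is to establish both inequalities using the Perron--Frobenius theory of nonnegative matrices. For the weak inequality, I would rely on Gelfand's formula $\rho(M)=\lim_{k\to\infty}\|M^{k}\|^{1/k}$ combined with the monotonicity of the entrywise order under multiplication of nonnegative matrices. For the strict inequality, the key ingredient is the fact that an irreducible nonnegative matrix has a strictly positive (left) Perron eigenvector, together with Perron--Frobenius applied a second time to upgrade a nonnegative eigenvector to a positive one.

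First I would prove the weak part. A straightforward induction on $k$ using $0\le A\le B$ shows that $0\le A^{k}\le B^{k}$ entrywise for every $k\ge 0$, since each entry of a power is a sum of products of nonnegative entries, each bounded by the corresponding product for $B$. Choosing any monotone submultiplicative matrix norm $\|\cdot\|$ (for instance the maximum row-sum norm $\|M\|_{\infty}=\max_{i}\sum_{j}|m_{ij}|$, which satisfies $\|M\|\le\|N\|$ whenever $0\le M\le N$), one obtains $\|A^{k}\|\le\|B^{k}\|$. Applying Gelfand's formula and letting $k\to\infty$ gives $\rho(A)\le\rho(B)$.

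Next I would prove the strict inequality by contradiction. Assume that $\rho(A)=\rho(B)=\rho$. Since $A$ is nonnegative, Perron--Frobenius supplies a nonzero nonnegative right eigenvector $x\ge 0$ with $Ax=\rho x$. Since $B$ is irreducible and nonnegative, Perron--Frobenius supplies a \emph{strictly positive} left eigenvector $y>0$ with $y^{T}B=\rho\, y^{T}$. Computing in two ways, using $A\le B$ and $x,y\ge 0$,
\begin{equation*}
\rho\, y^{T}x \;=\; y^{T}Ax \;\le\; y^{T}Bx \;=\; \rho\, y^{T}x,
\end{equation*}
so equality holds throughout and $y^{T}(B-A)x=0$. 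Since $B-A\ge 0$ entrywise and $y>0$, this forces $(B-A)x=0$, equivalently $Bx=\rho x$. Now $x\ge 0$ is a nonzero eigenvector of the irreducible matrix $B$ for its spectral radius $\rho(B)$, and by the Perron--Frobenius theorem every such eigenvector is a positive scalar multiple of the Perron eigenvector of $B$; in particular $x>0$. Finally, $(B-A)x=0$ together with $B-A\ge 0$ and $x>0$ forces $B-A=0$ (each row of $B-A$ dotted with the strictly positive vector $x$ vanishes), contradicting $A<B$.

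The main obstacle is the final step of the strict-inequality argument, namely passing from $y^{T}(B-A)x=0$ to $B=A$. It requires two separate invocations of Perron--Frobenius that exploit irreducibility of $B$: once to produce a strictly positive left eigenvector $y$ of $B$, and a second time to upgrade the a priori only nonnegative right eigenvector $x$ of $A$ (which may have zero coordinates) into a strictly positive eigenvector of $B$. Without the irreducibility hypothesis on $B$, neither step is available in general, and indeed one can exhibit reducible examples with $0\le A<B$ and $\rho(A)=\rho(B)$, showing that the hypothesis is essential.
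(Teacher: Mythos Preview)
Your argument is correct. The weak inequality via $0\le A^{k}\le B^{k}$ and Gelfand's formula is standard and sound, and your strict-inequality argument is clean: the two applications of Perron--Frobenius (positive left eigenvector of the irreducible $B$, then upgrading the nonnegative $x$ to a positive Perron vector once $Bx=\rho x$ is established) are exactly the right tools, and the final step $(B-A)x=0$, $x>0$, $B-A\ge 0 \Rightarrow B=A$ is valid.

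As for comparison with the paper: there is nothing to compare. The paper does not prove this lemma; it is quoted as a classical result from the references of Berman--Plemmons and Horn--Johnson and used as a black box in the proof of Theorem~\ref{thm::7}. Your write-up would serve perfectly well as a self-contained proof should one be desired.
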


Now we give the proof of Theorem  \ref{thm::7}.

{\flushleft \it Proof of Theorem \ref{thm::7}.}  Suppose that $G$ does not have a spanning $k$-ended tree. By Lemma \ref{lem::5},  
$$
\rho(G)\geq \rho(K_t \nabla (K_{n-k-2t+1} \cup (k+t-1)K_1)) > \rho(K_{n-k-t+1})=n-k-t.
$$
Since $G$ has minimum degree at least $t$, by  Lemma \ref{lem::4}, 
$$
\rho(G) \le \frac{t-1+\sqrt{(t+1)^2+4(2e(G)-nt)}}{2}.
$$ 
Combining the above two inequalities, we obtain
$$
\begin{aligned}
e(G) &> \frac{1}{2}(n^2-(2k+2t-1)n+k^2+2t^2+3kt-k-2t)\\
&= {n-k-t \choose 2}+(k+t-1)^2+k+t+n-k^2-\frac{3}{2}kt-\frac{1}{2}t^2-\frac{1}{2}t-1\\
&\ge {n-k-t \choose 2}+(k+t-1)^2+k+t,
\end{aligned}
$$ 
where the last inequality follows from  $n\geq k^2+\frac{3}{2}kt+\frac{1}{2}t^2+\frac{1}{2}t+1$. Let $G'=C_{n-1}(G)$. By Theorem \ref{thm::6}, we get $G' \cong K_t \nabla (K_{n-k-2t+1} \cup (k+t-1)K_1)$, and so  $\rho(G)\leq \rho(G')=\rho(K_t \nabla (K_{n-k-2t+1} \cup (k+t-1)K_1))$. Therefore, we conclude that $G=G'\cong \rho(K_t \nabla (K_{n-k-2t+1} \cup (k+t-1)K_1))$, and the result follows. 
   \qed

\section*{Acknowledgement}

X. Huang is supported by the National Natural Science Foundation of China (Grant No. 11901540).

\end{document}